\theoremstyle{plain}
\newtheorem{theorem}{Theorem}[section]
\newtheorem{proposition}{Proposition}[section]
\newtheorem{example}{Example}[section]
\title[Plumes in kinetic transport]{Plumes in kinetic transport: how the simple random walk can be too simple }
\author[M. Dekking]{Michel Dekking}
\author[D. Kong]{Derong Kong}
\author[A. van Opbroek]{Annegreet van Opbroek}
\address{3TU Applied Mathematics Institute and Delft
University of Technology, Faculty EWI, P.O.~Box 5031, 2600 GA
Delft, The Netherlands.}
\curraddr{Annegreet van Opbroek is currently with the Biomedical Imaging Group at Erasmus Medical Center, Rotterdam, The Netherlands}
\email{F.M.Dekking@tudelft.nl, \quad D.Kong@tudelft.nl, \quad a.vanopbroek@erasmusmc.nl}
\thanks{The second author is partially supported  by the National
Natural Science Foundation of China 10971069 and Shanghai Education Committee Project 11ZZ41.
}
\date{\today}
\newcommand{\ab}{{\textsc{A}}}          
\newcommand{\f}{{\textsc{F}}}          
\newcommand{\expec}[1]{\ensuremath{{\rm E}\!\left[#1\right]}}
\newcommand{\In}{\nu}       
\newcommand{\st}{\pi}    
\newcommand{\prob}[1]{\ensuremath{{\rm P}\left( #1 \right)}}
\newcommand{\proba}[1]{\ensuremath{{\mathbb P}\left( #1 \right)}}
\newcommand{\me}{\mathrm{e}}
\newcommand{\md}{\mathrm{d}}
\newcommand{\cov}[2]{\ensuremath{{\rm Var}\left( #1 ~|~ #2 \right)}}
\newcommand{\cexpec}[2]{\ensuremath{{\rm E}\left( #1 ~|~ #2 \right)}}
\renewcommand{\S}{\mathcal{S}}
\begin{document}
\maketitle

\begin{abstract}
We consider a discrete time  particle model for kinetic transport on the two dimensional integer lattice.
The particle can move due to advection in the $x$-direction and due to dispersion.
This happens when the particle is free, but it can also be adsorbed and then does not move.
When the dispersion of the  particle is modeled by simple random walk, strange phenomena occur.
In the second half of the paper, we resolve these problems and give expressions for the shape of the plume consisting of many particles.

\smallskip

\noindent{\em Key words.}{ simple random walk, conditional variance,  reactive transport, kinetic
adsorption,  double-peak.} 

\medskip

\noindent{\bf{MSC}:  60J20, 60J10}

 \end{abstract}

\section{Introduction}
We consider a discrete time two dimensional stochastic kinetic transport model, which is described by a two dimensional random walk with a drift
in the $x$ direction. Here the kinetics refers to the possibility that the particle may be adsorbed and released at random times, slowing down the
movement of the particle. Let $S(n)=(S_X(n),S_Y(n))$ be the position of the particle at time $n=0,1,\dots$. In the engineering literature
(see e.g.~\cite{Michalak_Kitanidis_2000}) simulations are performed with a large number of particles independently performing such a slowed down random walk.
This results in a plume of particles, and there is interest in the shape of this plume. One of the remarkable phenomena is that level sets
of the concentration function of the plume need not be convex (see Figure \ref{Fig: 1}, last column).

A natural quantitative way to measure the lateral spread of the plume at position $x$ at time $n$ is to
consider $\sigma^2_x=\cov{S_Y(n)}{S_X(n)=x}$.
The goal of our paper is to compute this conditional variance, and to discuss its properties. Intuitively, $\sigma^2_x$ should be increasing in $x$.
However, if we model the random walk with the natural choice: the simple symmetric random walk on the 2D integer lattice, then (see Figure \ref{Fig: 1}, bottom row) this is not the case,
as we prove in Section \ref{sec: conditional variances}.
A second strange phenomenon that we observe here is that there is symmetry (i.e.,~$\sigma^2_x=\sigma^2_{2n-x}$) when the total number of times
the particle has been free follows a binomial distribution (see Figure \ref{Fig: 1}, bottom left). The background for this is a symmetry property of the asymmetric 2D simple random walk,
as we show in Section~\ref{sec: a symmetry property of assymetric random walk}.

In Section \ref{sec: new types stochastic model} we get rid of the strange behaviour caused by the  simple random walk by tilting the steps. We also show that this choice is `right', as we obtain the same behaviour in the case of Gaussian distributions for the random walk.

For related work lying at the basis of the present paper, see \cite{Uffink_Elfeki_Dekking_Bruining_Kraaikamp_2011}.

\begin{center}
\begin{figure}[h!]
    \includegraphics[width=13cm]{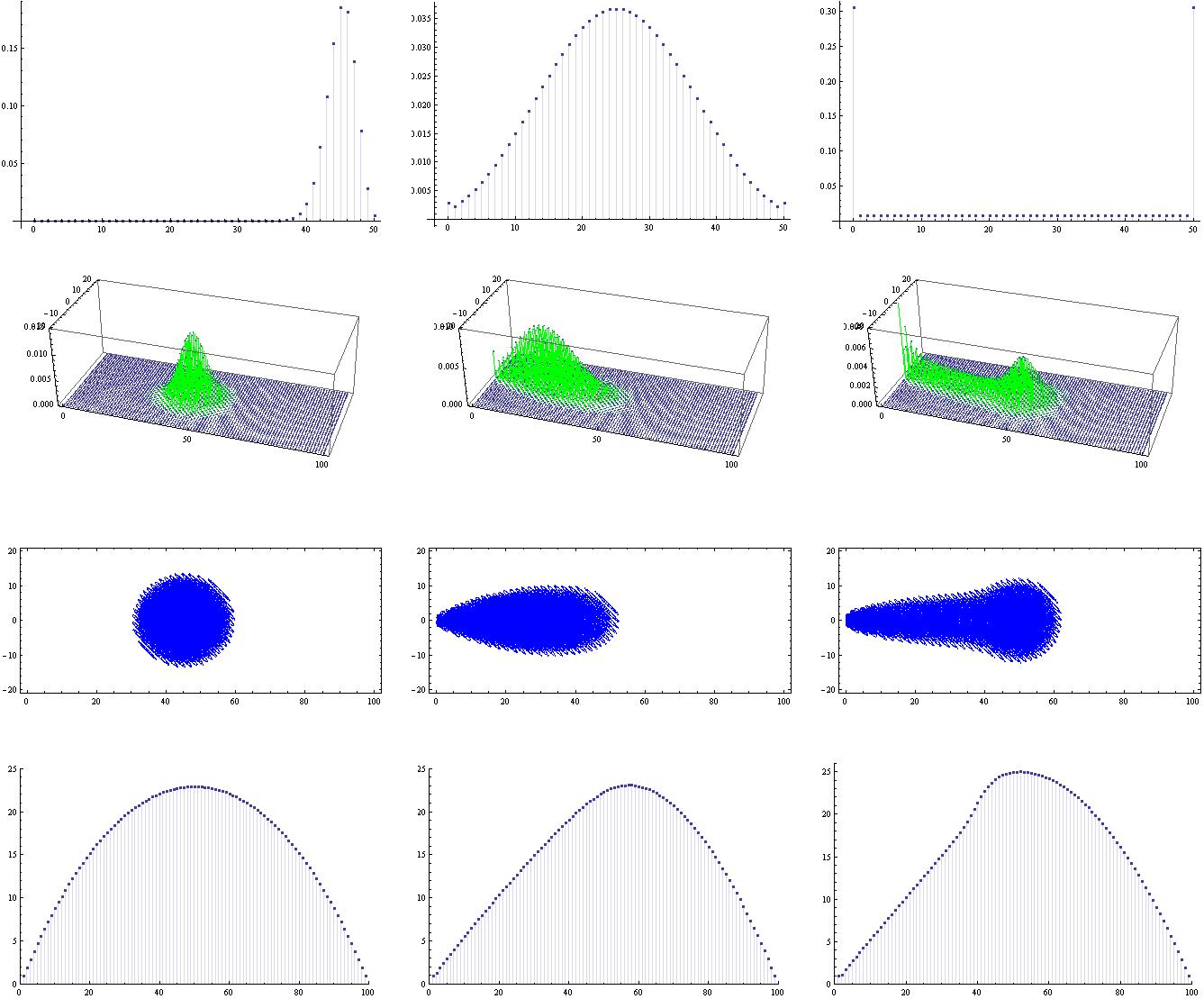}\\
  \caption{ The probability mass function $f_{50}$ of $K_{50}$, the  probability mass function  of $S(50)$, a list contour plot of its mass function and the conditional variance $\cov{S_Y(50)}{S_X(50)=x}$ with $\In=\pi, \alpha=\beta=1/4$.
  In the first column  $a=0.1, b=0.9$; in the second column $a=b=0.1$; in the last column $a=b=0.01$. See Section \ref{sec: stochastic model} and Example \ref{ex: 1} for further explanations.}\label{Fig: 1}
\end{figure}
\end{center}

\section{A discrete time stochastic model for kinetic transport}\label{sec: stochastic model}

 The kinetic transport model is built out of two independent processes: a 2D random walk $\{(X_k, Y_k),\, k\ge 1\}$, and a two-state Markov chain  $\{Z_k,\, k\ge 1\}$ (see  \cite{Dekking_Kong_2011_2}  for the 1D case). We denote by $\rm{P}$ the probability measure on a large enough space to accomodate both processes.
  The two processes describe the behavior of a single particle that in  each unit time interval can
 only be in one of two states: `free' or `adsorbed',
coded by the letters $\f$ and $\ab$. The particle can only
move when it is `free', and in this case its displacement has two
components: dispersion and advection.

Let $(X_k, Y_k)$ be the displacement of the particle due to the dispersion
the $k${th} time that it is `free'. We model the $(X_k,Y_k)$ as independent
identically distributed  random vectors  satisfying
\begin{equation}\label{eq:dispersion X_k Y_k}
\prob{(X_k,Y_k)=(j,0)}=\alpha,\quad\prob{(X_k,Y_k)=(0,j)}=\beta\quad\textrm{for}~j=\pm 1
\end{equation}
where $\alpha,\beta\ge 0,\; \alpha+\beta=1/2$. This means that the particle moves one unit to the left or the right with the same probability $\alpha$, and moves one unit up or down with the same probability $\beta$.
 When the particle is free, the displacement during a unit time interval due to advection  is given by one unit in the $x$ direction.

The kinetics, i.e., the movement from `free' to `adsorbed' and vice versa,  is modeled by  a Markov chain  $\{Z_k, k\ge 1\}$   on the two states $\{\f,\ab\}$ with initial distribution $\In=(\In_\f, \In_\ab)$ and transition matrix
 \begin{equation}\label{eq: transition matrix}
\left[ {\begin{array}{*{20}c}
   {p_{\f,\f} } & {p_{\f,\ab} }  \\
   {p_{\ab,\f} } & {p_{\ab,\ab} }  \\
\end{array}}  \right] =\left[ {\begin{array}{*{20}c}
   1 -a & a  \\
   b & {1-b}  \\
\end{array}} \right].
\end{equation}
Clearly the stationary distribution $\st=(\st_\f,\st_\ab)$ of the Markov chain $\{Z_k\}$ is given by
$\st_\f=b/(a+b)$ and $\st_\ab=a/(a+b).$
Let $K_n$ be the occupation time of the process $\{Z_k\}$  in state $\f$ up to time $n$, i.e.,
  $$K_n=\sum_{k=1}^n {\bf 1}_{\{Z_k=\f\}}.$$
  Then the distribution of $K_n$ is well
known, and is called a Markov binomial distribution (see, e.g., \cite{Dekking_Kong_2011,Omey_Santos_VanGulck_2008}). Let $f_n$ be its distribution, i.e., $f_n(k)=\prob{K_n=k}$ for $0\le k\le n$. Then it is easy to calculate $f_n(k)$ by the following recurrence equation
$$
f_{n+2}(k+1)=(1-b)f_{n+1}(k+1)+(1-a)f_{n+1}(k)-(1-a-b)f_n(k)
$$
with initial conditions
\begin{equation*}
\begin{split}
f_1(0)=\In_\ab,~ f_1(1)=\In_\f,~
 f_2(0)=\In_\ab(1-b), ~ f_2(1)=\In_\ab b+\In_\f a,~ f_2(2)=\In_\f(1-a).
 \end{split}
 \end{equation*}

 Now let $S(n)$ be the position of the particle at time $n$, where $S(0)=(0,0)$. Then by the above
we can write $S(n)$ for $n\ge 1$ as
\begin{equation*}
S(n)=(S_X(n),S_Y(n))=\sum_{k=1}^{K_n} (X_k +1, Y_k).
\end{equation*}

\section{Conditional variance}\label{sec: conditional variances}
In this section we are interested in the conditional variance of $S_Y(n)$ given $S_X(n)=x$.  For a real number $c\in\mathbb{R}$, let $\lfloor c\rfloor$ be the largest integer less than or equal to $c$, and $\lceil c\rceil$ be the least integer greater than or equal to $c$. For two real numbers $c$ and $d$, let $c\vee d:=\max\{c, d\}$ and $c\wedge d:=\min\{c, d\}.$

\begin{proposition}\label{prop: P(S_X,S_Y)}
For $n\ge 1$, and $0\le x\le 2n,  -n\le y\le n$, we have for $x\equiv y\mod{ 2}$
 \begin{equation*}
   \begin{split}
      \prob{S(n)=(x,y)}=\sum_{k=\lceil x/2\rceil}^n f_n(k)\sum_{j=0\vee(x-k)}^{\frac{x+y}{2}\wedge\frac{x-y}{2}} \frac{k!\;\;\alpha^{2j+k-x}\beta^{x-2j}}{j!\,(j+k-x)!\,((x+y)/2-j)!\,((x-y)/2-j)!},
   \end{split}
 \end{equation*}
and $\prob{S(n)=(x, y)}=0$ for $x\not\equiv y\mod 2$.
\end{proposition}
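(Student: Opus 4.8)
The plan is to condition on the number of free steps $K_n$ and then count lattice paths. Write $S(n) = \sum_{k=1}^{K_n}(X_k+1, Y_k)$, so conditionally on $K_n = k$ the vector $S(n)$ is a sum of $k$ i.i.d.\ steps, each of which is $(2,0)$ with probability $\alpha$ (a right dispersion step plus the advection unit), $(0,0)$ with probability $\alpha$ (a left dispersion step cancelling the advection unit), $(1,1)$ with probability $\beta$, and $(1,-1)$ with probability $\beta$. By the law of total probability,
\[
\prob{S(n)=(x,y)} = \sum_{k=0}^n f_n(k)\,\prob{S(n)=(x,y)\mid K_n=k},
\]
so everything reduces to a multinomial count of how many of the $k$ steps are of each of the four types.

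The key step is the bookkeeping. Suppose among the $k$ steps there are $a_1$ steps of type $(2,0)$, $a_2$ of type $(0,0)$, $a_3$ of type $(1,1)$, and $a_4$ of type $(1,-1)$, with $a_1+a_2+a_3+a_4 = k$. The constraints $S_X(n)=x$ and $S_Y(n)=y$ become $2a_1 + a_3 + a_4 = x$ and $a_3 - a_4 = y$. Solving: $a_3 = (a_3+a_4+y)/2$ and $a_4 = (a_3+a_4-y)/2$ where $a_3+a_4 = x-2a_1$, so $a_3 = (x-2a_1+y)/2$ and $a_4 = (x-2a_1-y)/2$; then $a_2 = k - a_1 - a_3 - a_4 = k - a_1 - (x-2a_1) = k + a_1 - x$. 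Setting $j := a_1$ as the summation index, the multinomial probability of this composition is
\[
\frac{k!}{j!\,(k+j-x)!\,\big((x+y)/2-j\big)!\,\big((x-y)/2-j\big)!}\;\alpha^{j}\,\alpha^{k+j-x}\,\beta^{(x+y)/2-j}\,\beta^{(x-y)/2-j},
\]
and collecting exponents gives $\alpha^{2j+k-x}\beta^{x-2j}$, matching the stated summand.

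It remains to pin down the ranges. Nonnegativity of the four multiplicities forces $j\ge 0$, $j \ge x-k$ (from $a_2 \ge 0$), $j \le (x+y)/2$, and $j \le (x-y)/2$, which is exactly the index range $\max\{0,x-k\}\le j\le \min\{(x+y)/2,(x-y)/2\}$. For the outer sum, the composition can be nonempty only if the $j$-range is nonempty; since $a_1 = j \le (x-y)/2$ and $a_1 = j \le (x+y)/2$ are compatible with $j=\lceil x/2\rceil$ being the minimal value making $a_2 = k+j-x \ge 0$ forceable, one checks $k \ge \lceil x/2 \rceil$ is necessary (and $k\le n$ trivially), giving the outer range $\lceil x/2\rceil \le k \le n$. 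Finally, $a_3$ and $a_4$ are integers only when $x+y$ and $x-y$ are even, i.e.\ $x\equiv y \bmod 2$; when this fails the probability is $0$. I expect the only mildly delicate point to be verifying that the stated summation bounds are not merely necessary but also sufficient (i.e.\ that no extra empty terms sneak in and none are omitted) and the parity/range edge cases, e.g.\ that $y$ can be negative and the floor/ceiling conventions are applied consistently; the algebra of collecting the $\alpha,\beta$ exponents is routine.
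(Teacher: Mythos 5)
Your proof is correct and takes essentially the same route as the paper: condition on $K_n=k$, classify the $k$ free steps into the four types, and solve for the multiplicities in terms of the number $j$ of rightward steps (your $a_1,a_2,a_3,a_4$ are exactly the paper's $\overline{x},\underline{x},\overline{y},\underline{y}$, with the same index range and the same bound $x\le 2k$ giving $k\ge\lceil x/2\rceil$). The sufficiency point you flag is routine — every $j$ in the stated range yields nonnegative integer multiplicities summing to $k$ once $x\equiv y \bmod 2$ — and the paper dispatches it just as briefly.
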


\begin{proof}
For $0\le x\le 2n$ and $-n\le y\le n$,  we consider the probability of the particle being at position $(x,y)$ at time $n$,  conditioned on $K_n=k$. Let $\overline{x}\, (\underline{x},\, \overline{y},\, \underline{y})$ be the number of rightwards (leftwards, upwards, downwards) steps of the random walk $S_k=(X_1+\cdots+X_k+k, Y_1+\cdots+Y_k)$. Suppose the walk makes $\overline{x}=j$ steps to the right. On the event $\{S_k=(x, y)\}$ we then have
\begin{equation*}
\left\{
  \begin{array}{l}
\overline{x}=j\\
\overline{x}-\underline{x}=x-k\\
\overline{y}-\underline{y}=y\\
\overline{x}+\underline{x}+\overline{y}+\underline{y}=k
  \end{array}
\right.
\quad\Longrightarrow\quad
\left\{
  \begin{array}{l}
\overline{x}=j\\
\underline{x}=j+k-x\\
\overline{y}=(x+y)/2-j\\
\underline{y}=(x-y)/2-j.
  \end{array}
\right.
\end{equation*}
Necessarily, $x\equiv y\mod 2$. Since $\overline{x}, \,\underline{x}, \,\overline{y}, \,\underline{y}\ge 0$, it immediately follows that
$$
\max\{0, x-k\}\le j\le \min\Big\{\frac{x+y}{2}, \frac{x-y}{2}\Big\}.
$$
By a standard combinatorial analysis, and using Equation (\ref{eq:dispersion X_k Y_k}) we see that
\begin{equation*}
  \begin{split}
  \prob{S(n)=(x, y)~|~K_n=k}&=\prob{S_k=(x, y)}\\
&=\sum_{0\vee(x-k)}^{\frac{x+y}{2}\wedge\frac{x-y}{2}}\frac{k!\;\;\alpha^{2j+k-x}\beta^{x-2j}}{j!\,(j+k-x)!\,((x+y)/2-j)!\,((x-y)/2-j)!}.
\end{split}
\end{equation*}
Clearly $x\le 2k$, yielding that $k\ge \lceil x/2\rceil$.  Multiplying by $f_n(k)$, and adding over $k$ ranging from $\lceil x/2\rceil$ to $n$, completes the proof of the proposition.
\end{proof}

\begin{proposition}\label{prop: Var(S_Y| S_X)}
  For $n\ge 1$ and $0\le x \le 2n$, the conditional variance  is given by
\begin{equation*}
\begin{split}
\cov{S_Y(n)}{S_X(n)=x}=\frac{\sum\limits_{k=\lceil x/2\rceil}^n f_n(k)\sum\limits_{j=0\vee(x-k)}^{\lfloor x/2\rfloor}(x-2j)\binom{k}{j, j+k-x, x-2j}\alpha^{2j+k-x}(2\beta)^{x-2j}}
  {\sum\limits_{k=\lceil x/2\rceil}^n f_n(k)\sum\limits_{j=0\vee(x-k)}^{\lfloor x/2\rfloor}\binom{k}{j, j+k-x, x-2j}\alpha^{2j+k-x}(2\beta)^{x-2j}}.
\end{split}
\end{equation*}
where $\binom{m}{\ell, s, m-\ell-s}=\frac{m!}{\ell! \,s!\,(m-\ell-s)!}$ denotes a  trinomial coefficient.
\end{proposition}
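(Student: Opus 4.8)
The plan is to work directly from the joint law established in Proposition~\ref{prop: P(S_X,S_Y)}. First I would observe that, since the vertical increments are symmetric (probability $\beta$ for an up-step and $\beta$ for a down-step), the expression in Proposition~\ref{prop: P(S_X,S_Y)} is invariant under $y\mapsto -y$, so that $\prob{S(n)=(x,y)}=\prob{S(n)=(x,-y)}$. Consequently, conditionally on $\{S_X(n)=x\}$ the variable $S_Y(n)$ has a distribution symmetric about $0$; in particular $\cexpec{S_Y(n)}{S_X(n)=x}=0$, and hence
\[
\cov{S_Y(n)}{S_X(n)=x}=\cexpec{S_Y(n)^{2}}{S_X(n)=x}=\frac{\sum_{y}y^{2}\,\prob{S(n)=(x,y)}}{\sum_{y}\prob{S(n)=(x,y)}},
\]
where both sums range over the finitely many $y$ for which $\prob{S(n)=(x,y)}>0$. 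It then remains to evaluate the numerator and the denominator separately.

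For the denominator $\sum_{y}\prob{S(n)=(x,y)}=\prob{S_X(n)=x}$, I would substitute the formula from Proposition~\ref{prop: P(S_X,S_Y)}, interchange the finite sums over $k$, $j$ and $y$, and for fixed $k$ and $j$ introduce $m:=x-2j$ (the number of vertical steps) and $\overline{y}:=(x+y)/2-j$. As $y$ runs over the admissible values, $\overline{y}$ runs over $0,1,\dots,m$ with $y=2\overline{y}-m$, so the inner sum over $y$ equals $\frac{k!}{j!\,(j+k-x)!\,m!}\,\alpha^{2j+k-x}\beta^{m}\sum_{\overline{y}=0}^{m}\binom{m}{\overline{y}}=\binom{k}{j,\,j+k-x,\,x-2j}\,\alpha^{2j+k-x}(2\beta)^{x-2j}$, using $\sum_{\ell=0}^{m}\binom{m}{\ell}=2^{m}$. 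The existence of at least one admissible $y$ forces $m\ge 0$, i.e.\ $j\le\lfloor x/2\rfloor$, which together with $j\ge 0\vee(x-k)$ is exactly the $j$-range in the statement; summing against $f_{n}(k)$ gives the claimed denominator. The numerator $\sum_{y}y^{2}\prob{S(n)=(x,y)}$ is treated in the same way, the only difference being that the inner sum becomes $\sum_{\overline{y}=0}^{m}(2\overline{y}-m)^{2}\binom{m}{\overline{y}}=m\,2^{m}$; indeed $2^{-m}\sum_{\overline{y}}(2\overline{y}-m)^{2}\binom{m}{\overline{y}}$ is the variance of $2B-m$ with $B\sim\mathrm{Bin}(m,1/2)$, which equals $4\cdot m/4=m$. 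This produces an extra factor $m=x-2j$, and after summing against $f_{n}(k)$ one obtains the numerator in the statement. Dividing the two yields the formula.

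The only step requiring real care — and where I would expect the main risk of an oversight — is the interchange of summation order combined with the substitution $y\leftrightarrow\overline{y}$: one must verify that summing over all admissible $y$ (those with $y\equiv x\bmod 2$ and $|y|\le x-2j$) corresponds precisely to letting $\overline{y}$ range over $\{0,1,\dots,m\}$, and that, after discarding empty inner sums, the outer index set collapses exactly to $0\vee(x-k)\le j\le\lfloor x/2\rfloor$. Everything else reduces to the two elementary binomial identities $\sum_{\ell=0}^{m}\binom{m}{\ell}=2^{m}$ and $\sum_{\ell=0}^{m}(2\ell-m)^{2}\binom{m}{\ell}=m\,2^{m}$.
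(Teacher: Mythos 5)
Your proof is correct and follows essentially the same route as the paper: symmetry gives $\cexpec{S_Y(n)}{S_X(n)=x}=0$, and both numerator and denominator are obtained by interchanging the sums in Proposition~\ref{prop: P(S_X,S_Y)} and applying the identities $\sum_{\ell=0}^{m}\binom{m}{\ell}=2^{m}$ and $\sum_{\ell=0}^{m}(2\ell-m)^{2}\binom{m}{\ell}=m\,2^{m}$. The only cosmetic difference is that the paper obtains $\prob{S_X(n)=x}$ by a direct combinatorial count rather than by marginalizing the joint law over $y$, which yields the same expression.
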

\begin{proof}
In a similar way as in Proposition $\ref{prop: P(S_X,S_Y)}$, we obtain that
\begin{equation*}
  \begin{split}
    \prob{S_X(n)=x}&=\sum_{k=0}^n f_n(k)\prob{S_X(n)=x~ | ~K_n=k}\\
    &=\sum_{k=\lceil x/2\rceil}^n f_n(k)\sum_{j=0\vee(x-k)}^{\lfloor x/2\rfloor}\frac{k!}{j!\,(j+k-x)!\,(x-2j)!}\alpha^{2j+k-x}(2\beta)^{x-2j}.
  \end{split}
\end{equation*}
By symmetry  $\expec{S_Y(n)~|~S_X(n)=x}=0$. So
\begin{equation*}
  \begin{split}
    \cov{S_Y(n)}{S_X(n)=x}=\sum_{\stackrel{y\equiv x\mod 2}{-n\le y\le n}}y^2\prob{S_Y(n)=y~| ~S_X(n)=x},
  \end{split}
\end{equation*}
and the proposition follows by using Proposition \ref{prop: P(S_X,S_Y)} in the following calculation
\begin{equation*}
  \begin{split}
    &\sum_{\stackrel{y\equiv x\mod 2}{-n\le y\le n}}y^2\prob{S(n)=(x,y)}\\
    =&~\sum\limits_{k=\lceil x/2\rceil}^n f_n(k)\sum\limits_{j=0\vee(x-k)}^{\lfloor x/2\rfloor}\frac{k!}{j!\,(j+k-x)!\,(x-2j)!}\alpha^{2j+k-x}\beta^{x-2j}\sum_{\stackrel{y\equiv x\mod 2}{-n\le y\le n}}y^2\binom{x-2j}{\frac{x-y}{2}-j}\\
    =&~\sum\limits_{k=\lceil x/2\rceil}^n f_n(k)\sum\limits_{j=0\vee(x-k)}^{\lfloor x/2\rfloor}\frac{k!}{j!\,(j+k-x)!\,(x-2j)!}\alpha^{2j+k-x}\beta^{x-2j}\sum_{s=0}^{x-2j}(x-2j-2s)^2\binom{x-2j}{s}\\
    =&~\sum\limits_{k=\lceil x/2\rceil}^n f_n(k)\sum\limits_{j=0\vee(x-k)}^{\lfloor x/2\rfloor}(x-2j)\frac{k!}{j!\,(j+k-x)!\,(x-2j)!}\alpha^{2j+k-x}(2\beta)^{x-2j},
  \end{split}
\end{equation*}
where the first equality follows from the usual convention $\binom{m}{\ell}=0$ for $\ell<0$ or $\ell>m$.
\end{proof}

\begin{example}\label{ex: 1}
Let $n=50, \alpha=\beta=1/4$, and the initial distribution $\In=\st.$ See Figure \ref{Fig: 1}.
\end{example}

 Note that the last column  of Figure \ref{Fig: 1}  shows that two peaks appear in the probability mass function  of $S(n)$ when both $a$ and $b$ are small.
 This double peaking can be  explained (see \cite{Dekking_Kong_2011_2}) by the multimodality of the Markov binomial distribution, i.e., $\{f_n(k)\}_{k=0}^n$
 can be bimodal or even trimodal when both $a$ and $b$ are small (see \cite{Dekking_Kong_2011}).

\section{A  symmetry property of asymmetric random walk}\label{sec: a symmetry property of assymetric random walk}
The first column of Figure \ref{Fig: 1} suggests that when $a+b=1$ the conditional variance is symmetric. This is the content of the next proposition.

\begin{proposition}\label{prop: symmetry 90 case}
  For $n\ge 1$ and $ 0\le x\le n$, if $\In=\st$  and $a+b=1$, we have
  $$
  \cov{S_Y(n)}{S_X(n)=n+x}=\cov{S_Y(n)}{S_X(n)=n-x}.
  $$
\end{proposition}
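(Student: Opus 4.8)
The plan is to reduce the statement, under the two hypotheses, to a symmetry property of an ordinary asymmetric planar random walk. Since $a+b=1$, the transition matrix (\ref{eq: transition matrix}) has two equal rows $(b,a)$, so $\{Z_k\}$ is i.i.d.; combined with $\In=\st=(b,a)$ this makes $Z_1,Z_2,\dots$ i.i.d.\ with $\prob{Z_k=\f}=b$. Hence the $n$ successive one-step displacements of the particle are i.i.d., each equal to $(2,0)$, $(0,0)$, $(1,1)$ or $(1,-1)$ with probabilities $\alpha b$, $\alpha b+a$, $\beta b$, $\beta b$ --- here $(0,0)$ occurs either from an adsorbed step (probability $a$) or from a free step with $X_k=-1$ (probability $\alpha b$), which is why these two probabilities are added. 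Let $R$, $Q$, $V$ be, over the $n$ time steps, the numbers of displacements $(2,0)$, of displacements $(0,0)$, and of vertical displacements $(1,\pm1)$. Then $(R,Q,V)$ is multinomial$(n;\,\alpha b,\,\alpha b+a,\,2\beta b)$, $S_X(n)=2R+V=n+R-Q$, and $S_Y(n)$ is the difference between the numbers of up- and of down-steps among the $V$ vertical ones. In particular $\{S_X(n)=n\pm x\}=\{R-Q=\pm x\}$, so, recalling that $\cexpec{S_Y(n)}{S_X(n)=x}=0$ by the up$\leftrightarrow$down reflection, the proposition is equivalent to
\[
\cexpec{S_Y(n)^2}{R-Q=x}=\cexpec{S_Y(n)^2}{R-Q=-x}.
\]

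The first key step is to show that this conditional second moment equals $\cexpec{V}{R-Q=\cdot}$. Conditionally on the value of $V$ and on all the information determining $S_X(n)$ (which does not include the signs of the vertical displacements), $S_Y(n)$ is a sum of $V$ independent fair $\pm1$'s, so $\cexpec{S_Y(n)^2}{V=v,\,R-Q=x}=v$; averaging over $v$ gives $\cexpec{S_Y(n)^2}{R-Q=x}=\cexpec{V}{R-Q=x}$. (This matches Proposition \ref{prop: Var(S_Y| S_X)}, where the weight $x-2j$ is precisely the number of vertical steps.) So it remains to prove that the conditional distribution of $V$ given $R-Q=x$ is the same as given $R-Q=-x$.

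The second key step is an explicit tilting identity for the multinomial. Writing $m=n-v$, the event $\{R-Q=x,\,V=v\}$ forces $R=(m+x)/2$, $Q=(m-x)/2$, whence
\begin{equation*}
\prob{R-Q=x,\;V=v}=\binom{n}{\tfrac{m+x}{2},\,\tfrac{m-x}{2},\,v}(\alpha b)^{(m+x)/2}(\alpha b+a)^{(m-x)/2}(2\beta b)^{v}.
\end{equation*}
Passing from $x$ to $-x$ interchanges the two exponents $(m\pm x)/2$ (the trinomial coefficient is symmetric in its lower entries), hence multiplies the whole term by the single factor $\big((\alpha b+a)/(\alpha b)\big)^{x}$, which is independent of $v$. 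This common factor cancels between the numerator and denominator of $\cexpec{V}{R-Q=\cdot}$, so $\cexpec{V}{R-Q=x}=\cexpec{V}{R-Q=-x}$, which combined with the first step proves the proposition.

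The step I expect to require the most care is the conceptual reduction at the start: seeing that once $a+b=1$ the adsorbed steps can be amalgamated with the free left-steps into one ``null'' category of a bona fide planar walk, and that the remaining asymmetry $\alpha b\neq\alpha b+a$ does no harm because it enters $\prob{R-Q=x,\,V=v}$ only through the $v$-independent exponential weight $\big((\alpha b+a)/(\alpha b)\big)^{x}$. The rest is bookkeeping; one should still check that the range of summation (the parity of $m$ and the inequalities $0\le(m\pm x)/2\le m\le n$) is invariant under $x\mapsto-x$, so the term-by-term comparison is valid.
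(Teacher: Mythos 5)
Your proof is correct and follows essentially the same route as the paper: reduce, via $a+b=1$ and $\In=\st$, to an i.i.d.\ planar walk whose horizontal asymmetry affects the joint law, under $x\mapsto -x$, only through the constant likelihood ratio $\bigl((\alpha b+a)/(\alpha b)\bigr)^{x}$, which cancels in the conditional law. The paper packages this as a standalone theorem on asymmetric simple random walks, proved by explicit double sums and a cross-multiplication identity (which also sidesteps dividing by $\alpha b$ when $\alpha=0$, a degenerate case in which your conditioning event is null anyway); your multinomial $(R,Q,V)$ bookkeeping and the reduction to $\cexpec{V}{R-Q=\pm x}$ are a cleaner presentation of the same tilting idea.
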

\begin{proof}
For $\In=\st$ and $a+b=1$, the Markov chain $\{Z_k\}$ is trivial in the sense that $\{Z_k\}$ is independent identically distributed. Then  the behavior of the particle is a  random walk starting at position $(0,0)$ with the following transition probabilities for $k=0,1,\cdots,n-1,$
\begin{equation*}
\begin{split}
 &\prob{S(k+1)=S(k)+(2,0)}=b \alpha,\quad \prob{S(k+1)=S(k)+(0,0)}=b\alpha+a,\\
 &\prob{S(k+1)=S(k)+(1,1)}=\prob{S(k+1)=S(k)+(1,-1)}=b\beta.
 \end{split}
 \end{equation*}
  For convenience we shift the random walk steps by adding $(-1,0)$ to them and let the particle start at position $(n,0)$. Then  the corresponding  transition probabilities
  of the random walk are given by
  \begin{equation*}
    \begin{split}
     & \prob{\hat{S}(k+1)=\hat{S}(k)+(1,0)}=b\alpha,\quad \prob{\hat{S}(k+1)=\hat{S}(k)+(-1,0)}=b\alpha+a,\\
     & \prob{\hat{S}(k+1)=\hat{S}(k)+(0,1)}=\prob{\hat{S}(k+1)=\hat{S}(k)+(0,-1)}=b\beta.
    \end{split}
  \end{equation*}
The result now follows from the following theorem with $\omega=b\alpha, \varepsilon=b\alpha+a$ and $\gamma=\delta=b\beta$.
\end{proof}

\begin{theorem}
Let $(S_n)=((S_n^{(1)}, S_n^{(2)}))$ be a simple random walk in the plane, i.e., $S_0=(0,0)$  and $(S_n)$ has increments $\mathcal{X}_k$ with
\begin{equation*}
     \proba{\mathcal{X}_k=(1,0)}=\omega,\quad \proba{\mathcal{X}_k=(-1,0)}=\varepsilon,\quad
     \proba{\mathcal{X}_k=(0,1)}=\gamma,\quad \proba{\mathcal{X}_k=(0,-1)}=\delta,
  \end{equation*}
where $\omega+\varepsilon+\gamma+\delta=1$. Then for $x=0,1,2,\dots,n$ and $-n\le y\le n$,
$$
\proba{S_n^{(2)}=y~\big|~S_n^{(1)}=x}=\proba{S_n^{(2)}=y~\big|~S_n^{(1)}=-x}.
$$
\end{theorem}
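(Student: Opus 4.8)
The plan is to condition on the number of horizontal steps and reduce the two-dimensional statement to a one-dimensional symmetry that is manifestly true. Write $S_n = (S_n^{(1)}, S_n^{(2)})$ and let $H$ be the number of steps among $\mathcal{X}_1,\dots,\mathcal{X}_n$ that are horizontal (i.e.\ equal to $(\pm 1,0)$), so that $n-H$ is the number of vertical steps. Conditionally on the unordered multiset of "which steps are horizontal" — equivalently conditionally on $H=h$ together with the choice of which $h$ indices are horizontal — the horizontal displacement $S_n^{(1)}$ depends only on the horizontal steps and the vertical displacement $S_n^{(2)}$ depends only on the vertical steps, and these two groups of steps are independent. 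Hence
\begin{equation*}
\proba{S_n^{(2)} = y \mid S_n^{(1)} = x} = \sum_{h} \proba{H = h \mid S_n^{(1)} = x}\,\proba{S_n^{(2)} = y \mid n - h \text{ vertical steps}},
\end{equation*}
and the vertical factor is visibly even in $y$ and, more to the point, does not involve $x$ at all. So it suffices to show $\proba{H = h \mid S_n^{(1)} = x} = \proba{H = h \mid S_n^{(1)} = -x}$ for every $h$.

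The reduction above turns the problem into a purely one-dimensional question about the horizontal walk. Conditioned on $H=h$, the horizontal coordinate performs a (lazy-free) walk of $h$ steps, each $+1$ with probability $\omega/(\omega+\varepsilon)$ and $-1$ with probability $\varepsilon/(\omega+\varepsilon)$; the probability that this walk equals $x$ is $\binom{h}{(h+x)/2}\,p^{(h+x)/2}q^{(h-x)/2}$ with $p=\omega/(\omega+\varepsilon)$, $q=1-p$, and $(h+x)$ even. Therefore
\begin{equation*}
\proba{H = h \mid S_n^{(1)} = x} = \frac{\proba{H=h}\,\binom{h}{(h+x)/2} p^{(h+x)/2} q^{(h-x)/2}}{\proba{S_n^{(1)}=x}},
\end{equation*}
and I want the numerator to be invariant under $x \mapsto -x$. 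But $\binom{h}{(h+x)/2} p^{(h+x)/2} q^{(h-x)/2}$ is \emph{not} symmetric in $x$ when $p \neq 1/2$: swapping $x\mapsto -x$ swaps the exponents of $p$ and $q$. So a naive term-by-term argument fails, and this is exactly the point where the argument has to be more clever — this is the main obstacle.

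The way around it is to not fix $h$ but to work with the generating-function / combinatorial identity that underlies it. The key observation is that $S_n^{(1)}=x$ and $S_n^{(1)}=-x$ differ only in that the roles of "right steps" and "left steps" are interchanged, while the conditional law of the vertical part given the \emph{number} of non-horizontal steps is unaffected by this interchange. Concretely, consider the bijection on step-sequences of length $n$ that swaps every $(1,0)$ with $(-1,0)$ and fixes the vertical steps; this maps $\{S_n^{(1)}=x\}$ onto $\{S_n^{(1)}=-x\}$, preserves $H$, preserves $S_n^{(2)}$, and multiplies the probability weight of a sequence with $r$ right steps and $\ell$ left steps by $(\varepsilon/\omega)^{r}(\omega/\varepsilon)^{\ell} = (\omega/\varepsilon)^{\ell - r} = (\omega/\varepsilon)^{x}$ — a factor depending only on $x$, not on the sequence. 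Thus for each fixed $y$,
\begin{equation*}
\proba{S_n^{(1)}=x,\; S_n^{(2)}=y} = (\varepsilon/\omega)^{x}\,\proba{S_n^{(1)}=-x,\; S_n^{(2)}=y},
\end{equation*}
and summing over $y$ gives $\proba{S_n^{(1)}=x} = (\varepsilon/\omega)^{x}\,\proba{S_n^{(1)}=-x}$. Dividing the two displays, the offending factor $(\varepsilon/\omega)^{x}$ cancels and we obtain the claimed identity $\proba{S_n^{(2)}=y \mid S_n^{(1)}=x} = \proba{S_n^{(2)}=y \mid S_n^{(1)}=-x}$. I would present the weight-swapping bijection carefully (checking that the weight ratio is indeed the constant $(\varepsilon/\omega)^x$ independently of how the $x$ net steps are distributed among right/left excursions), since that is the crux; everything else is bookkeeping. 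Note that $\gamma,\delta$ never enter, so no hypothesis $\gamma=\delta$ is needed, consistent with the theorem as stated.
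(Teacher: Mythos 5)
Your final argument is correct, and it establishes the theorem by a cleaner route than the paper, although the two proofs rest on the same underlying identity. The paper computes $\proba{S_n^{(1)}=\pm x, S_n^{(2)}=y}$ and $\proba{S_n^{(1)}=\pm x}$ explicitly as multinomial sums and then verifies the cross-multiplication identity $\proba{S_n^{(1)}=x, S_n^{(2)}=y}\,\proba{S_n^{(1)}=-x}=\proba{S_n^{(1)}=-x, S_n^{(2)}=y}\,\proba{S_n^{(1)}=x}$; inspecting those sums term by term, what makes this work is exactly that each pair of corresponding terms differs by the factor $(\omega/\varepsilon)^x$. Your left--right swapping bijection extracts that factor without writing down any formulas: the involution on step sequences preserves $S_n^{(2)}$ and rescales every path weight by the same constant power of $\omega/\varepsilon$ (depending only on $x$), so the constant cancels upon dividing the joint law by the marginal. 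That is the crux, and you identify it correctly; the explicit-sum proof buys you the closed-form expressions for the distribution (which the paper reuses elsewhere), while your argument buys transparency and robustness (e.g.\ it makes plain why no relation between $\gamma$ and $\delta$ is needed). Two small points. First, there is a sign slip in the exponent: with $r-\ell=x$, the weight of the swapped path is the original weight times $(\varepsilon/\omega)^{r-\ell}=(\varepsilon/\omega)^{x}$, so the correct relation is $\proba{S_n^{(1)}=x,S_n^{(2)}=y}=(\omega/\varepsilon)^{x}\proba{S_n^{(1)}=-x,S_n^{(2)}=y}$ (check $n=1$, $x=1$: $\omega=(\omega/\varepsilon)\cdot\varepsilon$); this is harmless since the factor cancels either way, but you should fix it. Second, your opening reduction via conditioning on the number of horizontal steps is a dead end that you yourself abandon mid-argument; in a final write-up you should delete those two paragraphs and present only the bijection, perhaps adding a remark that the statement is vacuous when $\varepsilon=0$ and $x>0$ since then the conditioning event $\{S_n^{(1)}=-x\}$ is null.
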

\begin{proof}
A similar combinatorial analysis as in the proof of Proposition \ref{prop: P(S_X,S_Y)} yields that for $x=0,1,\cdots,n$ and $-n\le y\le n$
\begin{equation*}
  \begin{split}
    \proba{S_n^{(1)}=x, S_n^{(2)}=y}&=\sum_{j=0}^{\frac{n-x-y}{2}\wedge\frac{n-x+y}{2}}\frac{ n!\;\;
    \omega^{j+x}\varepsilon^j\gamma^{(n-x-2j+y)/2}\delta^{(n-x-2j-y)/2}}{j!\,(j+x)!\,((n-x-y)/2-j)!\,((n-x+y)/2-j)!},\\
    \proba{S_n^{(1)}=-x, S_n^{(2)}=y}&=\sum_{j=0}^{\frac{n-x-y}{2}\wedge\frac{n-x+y}{2}}\frac{ n!\;\;
    \omega^{j}\varepsilon^{j+x}\gamma^{(n-x-2j+y)/2}\delta^{(n-x-2j-y)/2}}{j!\,(j+x)!\,((n-x-y)/2-j)!\,((n-x+y)/2-j)!}
  \end{split}
\end{equation*}
if $y\equiv n-x \mod 2$, and $\proba{S_n^{(1)}=x, S_n^{(2)}=y}=\proba{S_n^{(1)}=-x, S_n^{(2)}=y}=0$ if $y\not\equiv n-x \mod 2.$
Similarly,
\begin{equation*}
\begin{split}
\proba{S_n^{(1)}=x}&=\sum_{i=0}^{\lfloor\frac{n-x}{2}\rfloor}\frac{n!}{i!\,(i+x)!\,(n-x-2i)!} \omega^{i+x}\varepsilon^i(\gamma+\delta)^{n-2i-x},\\
\proba{S_n^{(1)}=-x}&=\sum_{i=0}^{\lfloor\frac{n-x}{2}\rfloor}\frac{n!}{i!\,(i+x)!\,(n-x-2i)!} \omega^{i}\varepsilon^{i+x}(\gamma+\delta)^{n-2i-x}.
\end{split}
\end{equation*}
Then it is easy to check that for $x=0,1,\cdots,n$ and $-n\le y\le n$
\begin{equation*}
  \begin{split}
    \proba{S_n^{(1)}=x, S_n^{(2)}=y}\proba{S_n^{(1)}=-x}
    =\proba{S_n^{(1)}=-x, S_n^{(2)}=y}\proba{S_n^{(1)}=x},
  \end{split}
\end{equation*}
which finishes the proof of the theorem.
\end{proof}

\section{Forty five degree dispersion}\label{sec: new types stochastic model}
To fix the strange behavior observed when the dispersion is 2D
simple random walk, we consider now a well known variant  with increments
$$
\prob{(X_k,Y_k)=(1,j)}=\alpha,\quad\prob{(X_k,Y_k)=(-1,j)}=\beta\quad\textrm{for}~j=\pm 1,
$$
where $\alpha, \beta\ge 0$ and $\alpha+\beta=1/2.$ Let $(\S(n))$ be the corresponding kinetic transport process.
\begin{proposition}\label{prop: Var_45 degree}
  For $n\ge 1$ and $0\le x\le n, -n\le y\le n$, the distribution of $\S(n)$ is given by
  $$\prob{\S(n)=(2x,y)}=\sum_{k=x}^n f_n(k)\binom{k}{x}\binom{k}{\frac{k+y}{2}}\alpha^x\beta^{k-x},$$
 and the conditional variance is given by
  $$
  \cov{\S_Y(n)}{\S_X(n)=2x}=\frac{\sum_{k=x}^n k  f_n(k)\binom{k}{x}(2\alpha)^x(2\beta)^{k-x}}{\sum_{k=x}^n f_n(k)\binom{k}{x}(2\alpha)^x(2\beta)^{k-x}}.
  $$
 \end{proposition}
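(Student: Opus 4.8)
The plan is to condition on the occupation time $K_n$ and to exploit the product structure of the $45^\circ$ increments, just as in the proof of Proposition~\ref{prop: P(S_X,S_Y)}. First I would record the key structural observation: when the particle is free, the $x$-displacement in one step is $X_k+1$, which equals $2$ with probability $2\alpha$ (the event $X_k=1$) and $0$ with probability $2\beta$ (the event $X_k=-1$), while the $y$-displacement $Y_k$ equals $+1$ or $-1$ with probability $\tfrac12$ each; and a direct check against the four defining probabilities shows that $X_k$ and $Y_k$ are independent. Hence $\S_X(n)$ takes only even values, and conditionally on $\{K_n=k\}$ the process $\S(n)$ decomposes into an independent pair: a count of ``$x$-rightward'' steps among the $k$ free steps, and a length-$k$ simple symmetric $\pm1$ walk in the $y$-coordinate.

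Next, on $\{K_n=k\}$ the event $\{\S(n)=(2x,y)\}$ occurs precisely when $x$ of the $k$ free steps have $X_i=1$ and $(k+y)/2$ of them have $Y_i=1$; the former contributes $\binom{k}{x}(2\alpha)^x(2\beta)^{k-x}$, the latter $\binom{k}{(k+y)/2}(\tfrac12)^k$ (which forces $y\equiv k\bmod 2$ and $|y|\le k$). Multiplying the two by independence and using $2^x\,2^{k-x}\,2^{-k}=1$ collapses the prefactor to $\alpha^x\beta^{k-x}$; then multiplying by $f_n(k)$ and summing over $x\le k\le n$ --- the lower limit being forced by $\binom{k}{x}$ --- gives the asserted formula for $\prob{\S(n)=(2x,y)}$. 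Summing this identity over the admissible $y$ and using $\sum_y\binom{k}{(k+y)/2}(\tfrac12)^k=1$ yields $\prob{\S_X(n)=2x}=\sum_{k=x}^n f_n(k)\binom{k}{x}(2\alpha)^x(2\beta)^{k-x}$.

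For the conditional variance I would first note that, since the $y$-increments are symmetric, the involution $y\mapsto-y$ preserves $\prob{\S(n)=(2x,y)}$, so $\expec{\S_Y(n)\mid \S_X(n)=2x}=0$ and therefore $\cov{\S_Y(n)}{\S_X(n)=2x}$ equals $\sum_y y^2\prob{\S(n)=(2x,y)}$ divided by $\prob{\S_X(n)=2x}$. In the numerator I would pull the sum over $k$ to the front and evaluate the inner sum via the elementary identity $\sum_{s=0}^k(2s-k)^2\binom{k}{s}=k\,2^k$ (equivalently, $2^k\,\mathrm{Var}(2B-k)$ with $B\sim\mathrm{Binomial}(k,\tfrac12)$, and $\mathrm{Var}(2B-k)=4\cdot\tfrac k4=k$). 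This turns the weight $\alpha^x\beta^{k-x}\cdot k\,2^k$ into $k\,(2\alpha)^x(2\beta)^{k-x}$, and dividing by the expression already found for $\prob{\S_X(n)=2x}$ gives exactly the claimed ratio.

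I do not expect a genuine obstacle here: the argument is essentially bookkeeping once the product structure is isolated. The only points that need care are (i) verifying that the $x$- and $y$-components of an increment are truly independent, so that the two binomial factors decouple cleanly; (ii) keeping track of the parity constraint $y\equiv k\bmod 2$ and of the correct summation ranges in $k$ and $y$ (or, equivalently, in $s=(k+y)/2$); and (iii) the binomial moment identity $\sum_{s=0}^k(2s-k)^2\binom{k}{s}=k\,2^k$, which one proves by differentiating $(1+t)^k$ twice at $t=1$ or by the variance computation just mentioned.
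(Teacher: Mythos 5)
Your proof is correct, and its overall skeleton coincides with the paper's: condition on $K_n=k$, compute the conditional law of $\S(n)$, obtain the marginal of $\S_X(n)$, use the $y\mapsto -y$ symmetry to kill the conditional mean, and finish with the binomial identity $\sum_{s=0}^k(2s-k)^2\binom{k}{s}=k\,2^k$. The one place where you genuinely diverge is the derivation of $\prob{\S(n)=(2x,y)\mid K_n=k}$. The paper classifies the $k$ free steps into the four types $(\pm1,\pm1)$, sums over the number $j$ of $(1,1)$-steps, and collapses the resulting sum $\sum_j\binom{k}{x}\binom{x}{j}\binom{k-x}{(k+y)/2-j}$ via Vandermonde's identity. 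You instead observe up front that $X_k$ and $Y_k$ are independent (since $\prob{X_k=1}=2\alpha$, $\prob{Y_k=\pm1}=\tfrac12$, and $\alpha=2\alpha\cdot\tfrac12$), so that conditionally on $K_n=k$ the two coordinates decouple into independent binomials and the product $\binom{k}{x}(2\alpha)^x(2\beta)^{k-x}\cdot\binom{k}{(k+y)/2}2^{-k}$ is immediate. This is the conceptual content of the Vandermonde collapse, and your route makes it visible rather than recovering it by computation; it also hands you the marginal $\prob{\S_X(n)=2x\mid K_n=k}$ for free, whereas the paper rederives it by a separate counting argument. The paper's method has the advantage of not depending on the exact independence structure and thus parallels the proof of Proposition~\ref{prop: P(S_X,S_Y)}, where no such factorization is available. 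Your bookkeeping of the constraints $k\ge x$, $y\equiv k \bmod 2$, $|y|\le k$ and the final variance computation are all in order.
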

 \begin{proof}
The proof of the proposition is quite similar to that for Proposition \ref{prop: P(S_X,S_Y)}.  For $0\le x\le n$ and $-n\le y\le n$, we consider the probability of the particle being at position $(2x, y)$ at time $n$ by conditioning on $K_n=k$.  In order to achieve $\S_X(n)=k+X_1+\cdots+X_k=2x$ the particle needs to move $x$ steps to the right and $k-x$ steps to the left. Also to obtain $\S_Y(n)=Y_1+\cdots Y_k=y$ the particle needs to move $(k+y)/2$ steps up and $(k-y)/2$ steps down. Let $j$ be the number of steps of type $(1,1)$, then the particle moves $x-j$ steps of type $(1,-1)$, $(k+y)/2-j$ steps of type $(-1,1)$, and the remaining $k-(j+x-j+(k+y)/2-j)=(k-y)/2-(x-j)$ steps of type $(-1,-1)$. To ensure the nonnegativity of all of these quantities, we have $x\le k\le n$ and $\max\{0,(y-k)/2+x\}\le j\le \min\{x,(k+y)/2\}$.  We conclude by using Vandermonde's identity, obtaining  that for $0\le x\le n, -n\le y\le n$ and $k\equiv y\mod 2$
$$
\prob{\S(n)=(2x,y)~|~K_n=k}=\sum_{j=0}^{(k+y)/2}\binom{k}{x}\binom{x}{j}
\binom{k-x}{\frac{k+y}{2}-j}\alpha^x\beta^{k-x}=\binom{k}{x}\binom{k}{\frac{k+y}{2}}\alpha^x\beta^{k-x}.
$$
For the conditional variance $\cov{\S_Y(n)}{\S_X(n)=2x}$, we still need the probability of the particle being at position $2x$ at time $n$. In a similar way as above it follows that
$$\prob{\S_X(n)=2x~|~K_n=k}=\binom{k}{x}(2\alpha)^x(2\beta)^{k-x}.$$
By symmetry $\expec{\S_Y(n)~|~\S_X(n)=2x}=0$. So
$$
\cov{\S_Y(n)}{\S_X(n)=2x}=\sum_{y=-n}^n y^2\prob{\S_Y(n)=y~|~\S_X(n)=2x},
$$
and the proposition follows by the following calculation
\begin{equation*}
\begin{split}
 & \sum_{y=-n}^n y^2\prob{\S(n)=(2x,y)}=\sum_{y=-n}^n y^2\sum_{\stackrel{k\equiv y\mod 2}{x\le k\le n}}f_n(k)\binom{k}{x}\binom{k}{\frac{k+y}{2}}\alpha^x\beta^{k-x}\\
  =&~\sum_{k=x}^n f_n(k)\binom{k}{x}\alpha^x\beta^{k-x}\sum_{\stackrel{y\equiv k\mod 2}{-n\le y\le n}}y^2\binom{k}{\frac{k+y}{2}}=\sum_{k=x}^n f_n(k)\binom{k}{x}\alpha^x\beta^{k-x}\sum_{s=0}^k(k-2s)^2\binom{k}{s}\\
  =&~\sum_{k=x}^n k  f_n(k)\binom{k}{x}(2\alpha)^x(2\beta)^{k-x}.
  \end{split}
\end{equation*}
 \end{proof}

 \begin{example}
Let $n=50, \alpha=\beta=1/4$, and the initial distribution $\In=\st$. See Figure \ref{Fig: 2}.
\begin{center}
\begin{figure}[h!]
    \includegraphics[width=13cm]{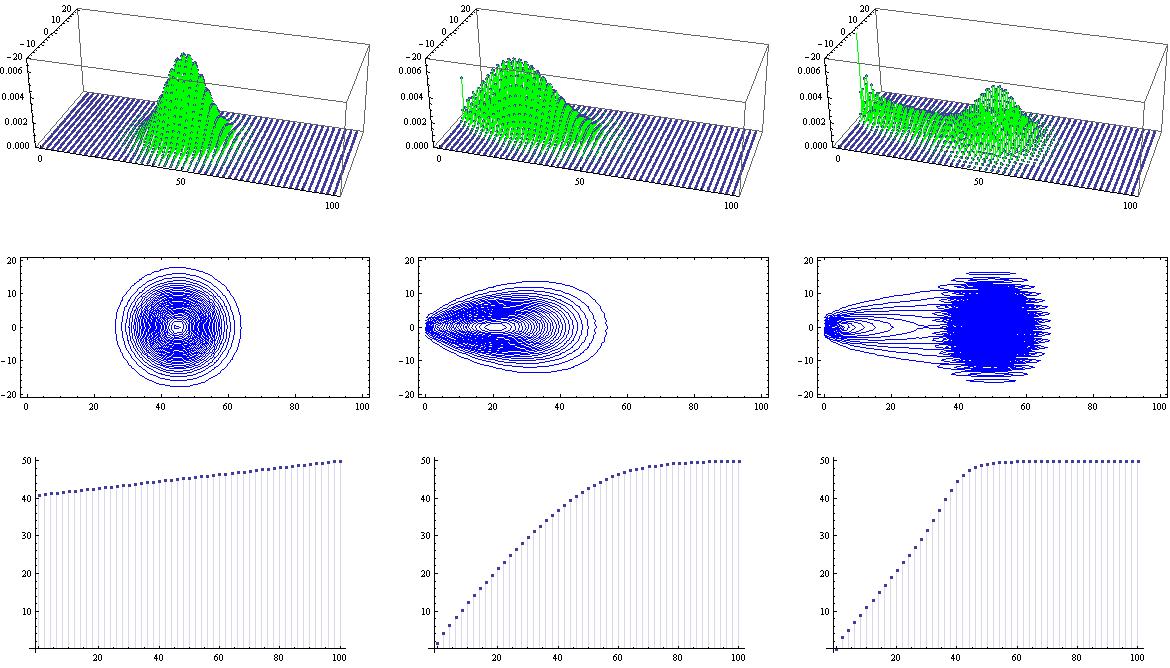}\\
  \caption{The  probability mass function of $\S(50)$, a list contour plot of its mass function and the conditional variance $\cov{\S_Y(50)}{\S_X(50)=2x}$ with $\In=\st, \alpha=\beta=1/4$.  In the first column  $a=0.1, b=0.9$; in the second column $a=b=0.1$; in the last column $a=b=0.01$.}\label{Fig: 2}
\end{figure}
\end{center}
\end{example}
Figure \ref{Fig: 2} suggests that the conditional variance $\cov{\S_Y(n)}{\S_X(n)=2x}$ is increasing in $x$. This is indeed the case, and can be shown by using Proposition \ref{prop: Var_45 degree}.
\begin{proposition}\label{prop: increasing 45 case}
  The conditional variance $\cov{\S_Y(n)}{\S_X(n)=2x}$ is an increasing function in $x$.
\end{proposition}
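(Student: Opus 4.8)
The plan is to turn the monotonicity statement into a single elementary inequality between double sums. For $0\le x\le n$ and $x\le k\le n$ abbreviate
\[
w_k(x):=f_n(k)\binom{k}{x}(2\alpha)^x(2\beta)^{k-x},\qquad
D(x):=\sum_{k=x}^n w_k(x),\qquad N(x):=\sum_{k=x}^n k\,w_k(x),
\]
so that, by Proposition \ref{prop: Var_45 degree}, $\cov{\S_Y(n)}{\S_X(n)=2x}=N(x)/D(x)$, where $D(x)=\prob{\S_X(n)=2x}>0$ for each relevant $x$ (for $\alpha,\beta>0$ this is immediate, and otherwise there is nothing to prove). Conceptually, $N(x)/D(x)=\expec{K_n\mid R=x}$ where $R$ denotes the number of rightward dispersion steps, and what we want is that the conditional law of $K_n$ given $R=x$ is stochastically increasing in $x$; but it is cleanest to argue directly. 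Monotonicity, i.e.\ $N(x+1)/D(x+1)\ge N(x)/D(x)$ for $0\le x\le n-1$, is equivalent to $N(x+1)D(x)-N(x)D(x+1)\ge 0$.

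The key step is then to expand this difference as a double sum over two indices $k,l$, rename $k\leftrightarrow l$ in the product $N(x)D(x+1)$, and symmetrize, which gives
\[
N(x+1)D(x)-N(x)D(x+1)=\tfrac12\sum_{k,l}(k-l)\bigl[w_k(x+1)w_l(x)-w_l(x+1)w_k(x)\bigr].
\]
Here one uses the elementary binomial identity $\binom{m}{x+1}=\tfrac{m-x}{x+1}\binom{m}{x}$, which yields
\[
\binom{k}{x+1}\binom{l}{x}-\binom{l}{x+1}\binom{k}{x}=\frac{k-l}{x+1}\binom{k}{x}\binom{l}{x},
\]
and hence, after substituting the definition of $w_k(x)$ and simplifying the powers of $2\alpha,2\beta$,
\[
w_k(x+1)w_l(x)-w_l(x+1)w_k(x)=\frac{k-l}{x+1}\,f_n(k)f_n(l)\binom{k}{x}\binom{l}{x}(2\alpha)^{2x+1}(2\beta)^{k+l-2x-1}.
\]
Plugging this back in, every summand acquires a factor $(k-l)^2\ge 0$, so $N(x+1)D(x)-N(x)D(x+1)\ge 0$, and chaining over $x=0,1,\dots,n-1$ gives the claim.

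I do not expect a serious obstacle here: once the symmetrization trick and the binomial identity $\binom{m}{x+1}=\tfrac{m-x}{x+1}\binom{m}{x}$ are in hand, the remaining manipulations are routine bookkeeping of factorials and powers. The only point requiring a word of care is that the ratio $N(x)/D(x)$ is well defined, i.e.\ $D(x)=\prob{\S_X(n)=2x}>0$ for the values of $x$ considered; and, as the remark above indicates, the whole argument is just the statement that the two-dimensional array $\bigl(f_n(k)\binom{k}{x}(2\beta)^{k}\bigr)_{k,x}$ is log-supermodular in $(k,x)$ — equivalently, that $\prob{K_n=k\mid R=x}$ has monotone likelihood ratio in $x$ (the ratio of consecutive conditional mass functions being proportional to $k-x$), so that $\expec{K_n\mid R=x}$ increases with $x$.
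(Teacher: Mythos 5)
Your proof is correct and follows essentially the same route as the paper's: cross-multiply, expand $N(x+1)D(x)-N(x)D(x+1)$ as a double sum, symmetrize in $k,\ell$, and use $\binom{k}{x+1}\binom{\ell}{x}-\binom{\ell}{x+1}\binom{k}{x}=\frac{k-\ell}{x+1}\binom{k}{x}\binom{\ell}{x}$ to produce the factor $(k-\ell)^2\ge 0$. The only (immaterial) difference is that you symmetrize over the full index range and obtain an exact identity, whereas the paper first discards the nonnegative $\ell=x$ terms and symmetrizes over $k,\ell\ge x+1$; your closing monotone-likelihood-ratio remark is a nice conceptual gloss not present in the paper.
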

\begin{proof}
  Let $g(x):=\sum_{k=x}^n k  f_n(k)\binom{k}{x}(2\alpha)^x(2\beta)^{k-x}$ and $h(x):=\sum_{k=x}^n f_n(k)\binom{k}{x}(2\alpha)^x(2\beta)^{k-x}$. According to Proposition \ref{prop: Var_45 degree} $\cov{\S_Y(n)}{\S_X(n)=2x}=g(x)/h(x)$. For $0\le x<n$, note that $\cov{\S_Y(n)}{\S_X(n)=2(x+1)}\ge \cov{\S_Y(n)}{\S_X(n)=2x}$ is equivalent to $g(x+1)h(x)\ge g(x)h(x+1)$, since $h(x)>0$. Consider the difference
  \begin{equation*}
    \begin{split}
      &g(x+1)h(x)-g(x)h(x+1)\\
      =&~\sum_{k=x+1}^n\sum_{\ell=x}^n (k-\ell)\binom{k}{x+1}\binom{\ell}{x} f_n(k)f_n(\ell)(\alpha/\beta)^{2x+1}(2\beta)^{k+\ell}\\
      \ge &~\sum_{k,\ell=x+1}^n (k-\ell)\binom{k}{x+1}\binom{\ell}{x} w(k,\ell),
    \end{split}
  \end{equation*}
  where $w(k,\ell)=f_n(k)f_n(\ell)(\alpha/\beta)^{2x+1}(2\beta)^{k+\ell}\ge 0$.
 Using the symmetry   $w(k,\ell)=w(\ell,k)$,  the proposition then follows from
  \begin{equation*}
\begin{split}
  &g(x+1)h(x)-g(x)h(x+1)\\
  \ge&~\frac{1}{2}\sum_{k,\ell=x+1}^n (k-\ell)\binom{k}{x+1}\binom{\ell}{x} w(k,\ell)+\frac{1}{2}\sum_{k,\ell=x+1}^n (\ell-k)\binom{\ell}{x+1}\binom{k}{x} w(\ell,k)\\
  =&~\frac{1}{2}\sum_{k,\ell=x+1}^n w(k,\ell)(k-\ell)\Big[\binom{k}{x+1}\binom{\ell}{x}-\binom{\ell}{x+1}\binom{k}{x}\Big]\\
  =&~\frac{1}{2(x+1)}\sum_{k,\ell=x+1}^n w(k,\ell)\binom{k}{x}\binom{\ell}{x}(k-\ell)^2~\ge 0.
    \end{split}
  \end{equation*}
\end{proof}

Proposition \ref{prop: increasing 45 case} seems to suggest that the strange phenomenon in the conditional variance (not increasing as in Figure \ref{Fig: 1}) will disappear if we consider independent increments $(X_k, Y_k)$ where  $X_k$ is also independent of $Y_k$ for $k=1,2,\cdots,K_n$. However, we will give a mean zero
nearest neighbor random walk example, where this phenomenon is still present.

Let $(X_k, Y_k), k\ge 1$ be independent identical distributed random vectors with distribution
\begin{equation*}
  \begin{split}
    \prob{(X_k, Y_k)=(1,\pm 1)}=\prob{(X_k, Y_k)=(-1,\pm 1)}=\xi,\quad
    \prob{(X_k, Y_k)=(\pm 1,0)}=\frac{1}{2}-2\xi.
  \end{split}
\end{equation*}
where $ 0<\xi<1/4$. Obviously, $X_k$ is independent of $Y_k$. Let $\bar{S}(n)=(\bar{S}_X(n), \bar{S}_Y(n))$ be the position of the particle at time $n$. One can compute its probability mass function and conditional variance $\cov{\bar{S}_Y(n)}{\bar{S}_X(n)=x}$.
\begin{example}
  Let $n=25, a=b=0.01, \xi=0.2$, and $\In=\st$. See Figure \ref{fig: 4}.
 \begin{center}
\begin{figure}[h!]
    \includegraphics[width=14cm]{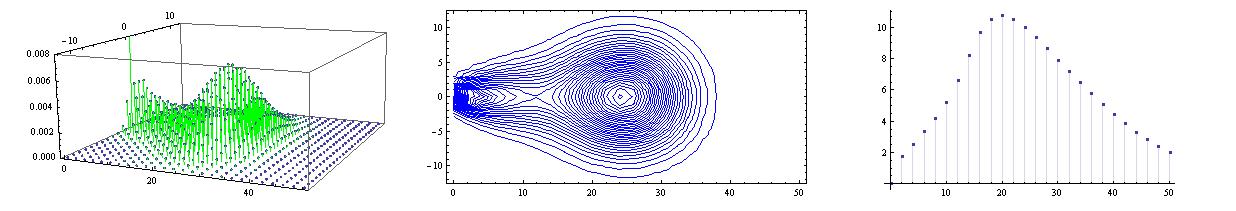}\\
  \caption{The  probability mass function of $\bar{S}(25)$, a list contour plot of its mass function and the conditional variance $\cov{\bar{S}_Y(25)}{\bar{S}_X(25)=2x}$ with $\In=\st, a=b=0.01,\; \xi=0.2$.}\label{fig: 4}
\end{figure}
\end{center}
\end{example}

\section{Gaussian dispersion}

It is natural (and common practice in the engineering literature, see e.g.,~\cite{Michalak_Kitanidis_2000}) to consider a Gaussian  dispersion. Let $\mathbf{S}(n)$ be the position of the particle at time $n$ as described in Section \ref{sec: stochastic model} except that $(X_k,Y_k)$ now has a Gaussian distribution with independent marginals
$$
X_k\stackrel{\mathbf{d}}{=}\mathcal{N}(0,2\alpha),\quad Y_k\stackrel{\mathbf{d}}{=}\mathcal{N}(0,2\beta)\quad\textrm{for}~\alpha,\beta>0.
$$

\begin{proposition}\label{prop: var_normal case}
  For $n\ge 1$, the distribution $\mu_n$ of the random variable $\mathbf{S}(n)$ can be written as
   $$
   \mu_n=f_n(0)\delta_{(0,0)}+(1-f_n(0))\tilde{\mu}_n,
   $$
   where $\delta_{(0,0)}$ is Dirac measure at the point $(0,0)$ and $\tilde{\mu}_n$ is the distribution of  a continuous random variable having density function
  $$\tilde{f}_{\mathbf{S}(n)}(x,y)=\frac{1}{1-f_n(0)}\sum_{k=1}^n \frac{f_n(k)}{4\pi k\sqrt{\alpha \beta}}\exp{\Big(-\frac{(x-k)^2}{4 k \alpha}-\frac{y^2}{4k \beta}\Big)}.$$
Moreover, the conditional variance is given by
  $$\cov{\mathbf{S}_Y(n)}{\mathbf{S}_X(n)=x}=2\beta(1-f_n(0))\frac{\sum_{k=1}^n \exp{\big(-(x-k)^2/(4k \alpha)\big)}f_n(k)\sqrt{k}}{\sum_{k=1}^n \exp{\big(-(x-k)^2/(4k \alpha)\big)}f_n(k)/\sqrt{k}}.$$
\end{proposition}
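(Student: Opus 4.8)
The plan is to follow the pattern of Propositions \ref{prop: P(S_X,S_Y)} and \ref{prop: Var_45 degree}: condition on $K_n$ and use that a sum of i.i.d.\ Gaussians is Gaussian. Given $\{K_n=k\}$ with $k\ge1$, we have $\mathbf{S}(n)=\sum_{j=1}^{k}(X_j+1,Y_j)$, which is a sum of $k$ independent copies of a Gaussian vector with independent coordinates, shifted deterministically by $(k,0)$; hence conditionally on $\{K_n=k\}$ the vector $\mathbf{S}(n)$ is bivariate normal with independent marginals $\mathbf{S}_X(n)\sim\mathcal N(k,2k\alpha)$ and $\mathbf{S}_Y(n)\sim\mathcal N(0,2k\beta)$, so it has density
\[
g_k(x,y)=\frac{1}{4\pi k\sqrt{\alpha\beta}}\exp\Big(-\frac{(x-k)^2}{4k\alpha}-\frac{y^2}{4k\beta}\Big).
\]
On $\{K_n=0\}$ we simply have $\mathbf{S}(n)=(0,0)$. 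Unconditioning, the law of $\mathbf{S}(n)$ is $\mu_n=f_n(0)\delta_{(0,0)}+\sum_{k=1}^{n}f_n(k)\,g_k(x,y)\,\md x\,\md y$; isolating the $k=0$ term and dividing the continuous part by $1-f_n(0)=\sum_{k=1}^{n}f_n(k)$ gives exactly the decomposition $\mu_n=f_n(0)\delta_{(0,0)}+(1-f_n(0))\tilde\mu_n$ with density $\tilde f_{\mathbf{S}(n)}$ as stated.

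For the conditional variance I would argue through this Gaussian-mixture structure. For a value $x$ in the support of the continuous part (where the atom at the origin is irrelevant), the conditional law of $\mathbf{S}_Y(n)$ given $\mathbf{S}_X(n)=x$ is the mixture $\sum_{k\ge1}p_k(x)\,\mathcal N(0,2k\beta)$, where the posterior weight $p_k(x)$ is proportional to $f_n(k)$ times the $\mathcal N(k,2k\alpha)$-density at $x$, i.e.\ proportional to $f_n(k)\,k^{-1/2}\exp\!\big(-(x-k)^2/(4k\alpha)\big)$ after cancelling the constant $(4\pi\alpha)^{-1/2}$; this comes from integrating $g_k(x,\cdot)$ over $y$ and normalising, and it uses that the two coordinates are conditionally independent given $K_n$. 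Since every mixture component is centred, $\cexpec{\mathbf{S}_Y(n)}{\mathbf{S}_X(n)=x}=0$, whence $\cov{\mathbf{S}_Y(n)}{\mathbf{S}_X(n)=x}=\expec{\mathbf{S}_Y(n)^2\mid\mathbf{S}_X(n)=x}=\sum_{k\ge1}p_k(x)\cdot 2k\beta$. Substituting the weights and using $k\cdot k^{-1/2}=k^{1/2}$ collapses this to the quotient in the statement.

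The individual ingredients are routine: finitely many terms, so sums and integrals commute freely; Gaussian normalising constants; and the fact that the variance of $\mathcal N(0,\sigma^2)$ is $\sigma^2$, which produces the factor $2k\beta$. The one genuine point of care — and what I would treat as the main obstacle — is the bookkeeping around the atom at $(0,0)$: one must state precisely in what sense the conditional-variance identity holds as a function of $x$ (namely at points where $\tilde\mu_n$, equivalently the mixture $\sum_{k\ge1}g_k$, has positive density), and one must be careful which marginal density one normalises the conditional law of $\mathbf{S}_Y(n)$ against, since that is exactly what controls whether and how the prefactor $1-f_n(0)$ appears in the final expression. Beyond that, everything is a direct computation entirely parallel to the proofs of Propositions \ref{prop: P(S_X,S_Y)} and \ref{prop: Var_45 degree}.
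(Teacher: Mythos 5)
Your derivation of the mixture decomposition is correct and takes a genuinely more elementary route than the paper: you condition on $K_n=k$ and use that a sum of $k$ independent Gaussian vectors is Gaussian with variances $2k\alpha$ and $2k\beta$, whereas the paper computes the characteristic function of $\mathbf{S}(n)$, identifies the atom at the origin from it, and recovers the density of the continuous part by inverse Fourier transformation. Both arguments are short and give the same $\tilde{f}_{\mathbf{S}(n)}$; yours avoids the Fourier inversion entirely and is arguably cleaner.

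The conditional variance is where the proposal does not close the argument. Your posterior-weight computation is the standard disintegration: for $x$ in the support of the continuous part, the weight of the component $\mathcal{N}(0,2k\beta)$ is proportional to $f_n(k)k^{-1/2}\exp\!\big(-(x-k)^2/(4k\alpha)\big)$, and summing $2k\beta$ against these weights gives
\[
2\beta\,\frac{\sum_{k=1}^n f_n(k)\sqrt{k}\,\me^{-(x-k)^2/(4k\alpha)}}{\sum_{k=1}^n f_n(k)k^{-1/2}\,\me^{-(x-k)^2/(4k\alpha)}},
\]
with \emph{no} factor $1-f_n(0)$: the mass $1-f_n(0)$ of the continuous part enters both the joint and the marginal densities and cancels in the ratio. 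The formula in the proposition carries an extra factor $1-f_n(0)$; in the paper's proof it appears because the authors write $\cexpec{\mathbf{S}_Y^2(n)}{\mathbf{S}_X(n)=x}=(1-f_n(0))\int y^2\,\tilde{f}_{\mathbf{S}(n)}(x,y)/\tilde{f}_{\mathbf{S}_X(n)}(x)\,\md y$, i.e., they keep the weight $1-f_n(0)$ of the continuous part in the numerator while normalizing against the marginal of the already renormalized measure $\tilde{\mu}_n$. You explicitly flag ``which marginal density one normalises against'' as the main point of care, but you never resolve it, and as written your computation yields an expression that differs from the one you are asked to prove by exactly this factor. To actually prove the proposition as stated you must either adopt the paper's normalization and show where the prefactor comes from, or make explicit that the prefactor-free expression is what the usual definition of conditional variance produces and reconcile that with the statement; leaving the prefactor as an acknowledged loose end is a genuine gap.
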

\begin{proof}
As in \cite{Dekking_Kong_2011_2} one easily finds the characteristic function $\varphi_{\mathbf{S}(n)}$ of $\mathbf{S}(n)$, and as in \cite{Dekking_Kong_2011_2} one shows that $(0,0)$ is the unique atom of $\mu_n$, with mass $f_n(0)$. Then the density function of the continuous part of $\mu_n$ follows by inverse Fourier transformation:
\begin{equation*}
  \begin{split}
    \tilde{f}_{\mathbf{S}(n)}(x,y)&=\frac{1}{1-f_n(0)}\frac{1}{(2\pi)^2}\int\int \big(\varphi_{\mathbf{S}(n)}(u_1,u_2)-f_n(0)\big)\me^{-i(u_1 x+u_2 y)}\md u_1\md u_2\\
    &=\frac{1}{1-f_n(0)}\sum_{k=1}^n f_n(k)\frac{1}{(2\pi)^2}\int\int \me^{-i u_1 x+k(i u_1-\alpha u_1^2)}\me^{-i u_2 y-k\beta u_2^2}\md u_1\md u_2\\
    &=\frac{1}{1-f_n(0)}\sum_{k=1}^n f_n(k)\frac{1}{4 \pi k\sqrt{\alpha \beta}}\exp{\Big(-\frac{(x-k)^2}{4k\alpha}-\frac{y^2}{4 k\beta}\Big)}.
  \end{split}
\end{equation*}
Using symmetry, the conditional variance is given by
\begin{equation*}
  \begin{split}
  \cov{\mathbf{S}_Y(n)}{\mathbf{S}_X(n)=x}=\cexpec{\mathbf{S}_Y^2(n)}{\mathbf{S}_X(n)=x}
  =(1-f_n(0))\int_{-\infty}^{\infty}y^2\frac{\tilde{f}_{\mathbf{S}(n)}(x,y)}{\tilde{f}_{\mathbf{S}_X(n)}(x)}\md y,
  \end{split}
\end{equation*}
where $\tilde{f}_{\mathbf{S}_X(n)}(x)=\int_{-\infty}^{\infty}\tilde{f}_{\mathbf{S}(n)}(x,y)\md y.$
The proof is then finished by a straightforward  calculation.
\end{proof}

 \begin{example}
Let $n=50, \alpha=\beta=1/4$, and the initial distribution $\In=\st$. See Figure \ref{Fig: 3}.
\begin{center}
\begin{figure}[h!]
    \includegraphics[width=13cm]{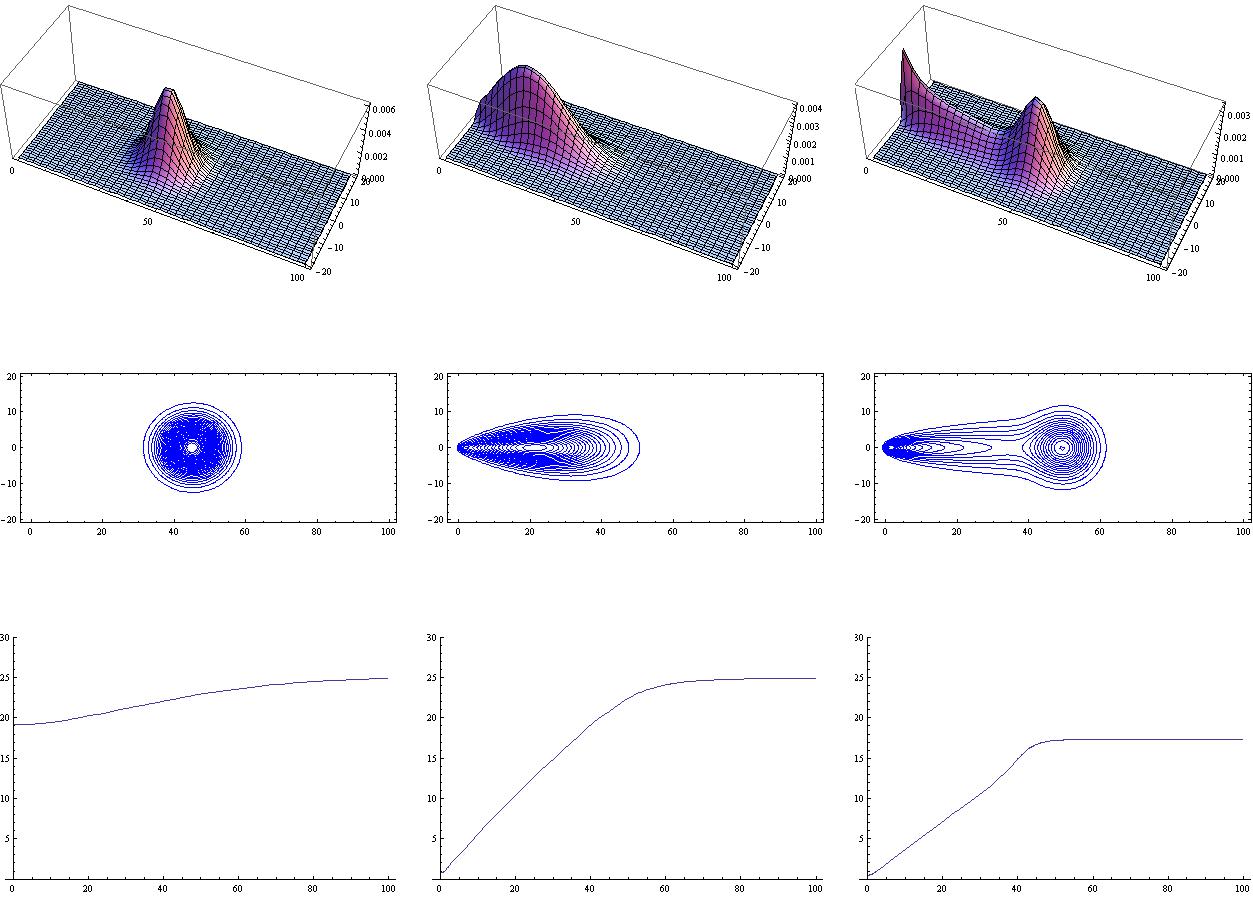}\\
  \caption{The density $\tilde{f}_{\mathbf{S}(50)}$, a contour plot of the density and the conditional variance $\cov{\mathbf{S}_Y(50)}{\mathbf{S}_X(50)=x}$ with $\In=\st, \alpha=\beta=1/4$. In the first column  $a=0.1, b=0.9$; in the second column $a=b=0.1$; in the last column $a=b=0.01$.}\label{Fig: 3}
\end{figure}
\end{center}
\end{example}
Figure \ref{Fig: 3} suggests that the conditional variance $\cov{\mathbf{S}_Y(n)}{\mathbf{S}_X(n)=x}$ is increasing in $x$. This is indeed the case, and is the content of  the following proposition.
\begin{proposition}\label{prop: increasing Gaussian case}
  The conditional variance $\cov{\mathbf{S}_Y(n)}{\mathbf{S}_X(n)=x}$ is an increasing function in $x$.
\end{proposition}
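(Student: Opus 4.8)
The plan is to treat $x$ as a continuous real parameter and to differentiate the closed form for the conditional variance furnished by Proposition~\ref{prop: var_normal case}, exploiting the same symmetrization device that powered the proof of Proposition~\ref{prop: increasing 45 case}. Write $w_k=w_k(x):=f_n(k)\exp(-(x-k)^2/(4k\alpha))$ for $1\le k\le n$, and set
\begin{equation*}
G(x):=\sum_{k=1}^n \sqrt{k}\,w_k(x),\qquad H(x):=\sum_{k=1}^n \frac{1}{\sqrt{k}}\,w_k(x),
\end{equation*}
so that Proposition~\ref{prop: var_normal case} reads $\cov{\mathbf{S}_Y(n)}{\mathbf{S}_X(n)=x}=2\beta(1-f_n(0))\,G(x)/H(x)$. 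Since $H(x)>0$ it suffices to show $G'(x)H(x)-G(x)H'(x)\ge 0$, and the first step is the elementary identity $w_k'(x)=-\frac{x-k}{2k\alpha}\,w_k(x)$.

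Substituting this and expanding the product of sums, one checks that the additive constants in $\frac{x-k}{k}=\frac{x}{k}-1$ cancel, leaving
\begin{equation*}
G'(x)H(x)-G(x)H'(x)=\frac{x}{2\alpha}\sum_{k,\ell=1}^n w_k\,w_\ell\,\frac{k-\ell}{k\ell}\sqrt{\frac{k}{\ell}}.
\end{equation*}
Next I would symmetrize the double sum in $k\leftrightarrow\ell$, exactly as in Proposition~\ref{prop: increasing 45 case}: pairing the $(k,\ell)$ and $(\ell,k)$ contributions and using $\frac{k-\ell}{k\ell}\big(\sqrt{k/\ell}-\sqrt{\ell/k}\big)=\frac{(k-\ell)^2}{(k\ell)^{3/2}}$ collapses the right-hand side to
\begin{equation*}
G'(x)H(x)-G(x)H'(x)=\frac{x}{4\alpha}\sum_{k,\ell=1}^n w_k\,w_\ell\,\frac{(k-\ell)^2}{(k\ell)^{3/2}}.
\end{equation*}
Since $w_k\ge 0$ for all $k$, the double sum is nonnegative, so the derivative has the sign of $x$; hence $G/H$, and therefore the conditional variance, is nondecreasing for $x\ge 0$.

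The computation is routine: differentiating term by term is legitimate because the sums are finite and each $w_k$ is smooth in $x$, and the symmetrization is pure bookkeeping. The one point I would expect to require explicit comment — and the only genuine subtlety — is that the computed derivative is proportional to $x$, so the argument delivers monotonicity on $x\ge 0$ only; this is exactly the relevant range, since the advective drift concentrates the marginal law of $\mathbf{S}_X(n)$ on the positive axis, in parallel with Proposition~\ref{prop: increasing 45 case}, where $\S_X(n)\ge 0$ automatically. Structurally the mechanism is the familiar one: the conditional variance is a ratio $\frac{\sum_k w_k(x)a_k}{\sum_k w_k(x)b_k}$ with $a_k=\sqrt{k}$ and $b_k=1/\sqrt{k}$, so that $a_k/b_k=k$ increases in $k$, and as $x$ grows the Gaussian weights $w_k(x)$ shift their mass toward larger $k$.
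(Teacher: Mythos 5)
Your proof is correct and follows essentially the same route as the paper's: differentiate the ratio $G/H$ using $w_k'(x)=-\frac{x-k}{2k\alpha}w_k(x)$, then symmetrize the double sum in $k\leftrightarrow\ell$ to exhibit $G'(x)H(x)-G(x)H'(x)$ as $\frac{x}{4\alpha}$ times a sum of manifestly nonnegative terms (your $\frac{(k-\ell)^2}{(k\ell)^{3/2}}w_kw_\ell$ agrees exactly with the paper's $\frac{(\ell-k)^2}{k\ell}w(k,\ell)$ after absorbing the $1/\sqrt{k\ell}$ factors). Your explicit caveat that this gives monotonicity only for $x\ge 0$ is a point the paper's proof silently glosses over --- its final ``$\ge 0$'' likewise requires $x\ge 0$ --- so you introduce no gap relative to the published argument.
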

\begin{proof}
Since by Proposition \ref{prop: var_normal case}  $\cov{\mathbf{S}_Y(n)}{\mathbf{S}_X(n)=x}$ is a differentiable function, we only need to show that its first derivative is nonnegative. For simplicity, we write for
$$g(x)=\sum_{k=1}^n f_n(k)\sqrt{k}\me^{-\frac{(x-k)^2}{4k\alpha}},\quad h(x)=\sum_{k=1}^n \frac{f_n(k)}{\sqrt{k}}\me^{-\frac{(x-k)^2}{4k\alpha}}.$$
So $\cov{\mathbf{S}_Y(n)}{\mathbf{S}_X(n)=x}'=\frac{2\beta(1-f_n(0))}{h^2(x)}\big(g'(x)h(x)-g(x)h'(x)\big)$. Note that
\begin{equation*}
  \begin{split}
    &g'(x)h(x)-g(x)h'(x)\\
    =&~\frac{1}{2\alpha}\sum_{k,\ell=1}^n\frac{f_n(k)}{\sqrt{k}}\frac{f_n(\ell)}{\sqrt{\ell}}
    \me^{-\frac{(x-k)^2}{4k\alpha}-\frac{(x-\ell)^2}{4\ell\alpha}}\frac{\ell-k}{k}(x-k)\\
    =:&~\frac{1}{2\alpha}\sum_{k,\ell=1}^n w(k,\ell)\frac{\ell-k}{k}(x-k),
  \end{split}
\end{equation*}
where $w(k,\ell)=\frac{f_n(k)}{\sqrt{k}}\frac{f_n(\ell)}{\sqrt{\ell}}\me^{-\frac{(x-k)^2}{4k\alpha}-\frac{(x-\ell)^2}{4\ell\alpha}}\ge 0$. Using the symmetry   $w(k,\ell)=w(\ell,k)$, the proposition then follows  from
\begin{equation*}
  \begin{split}
    &g'(x)h(x)-g(x)h'(x)\\
    =&~\frac{1}{4\alpha}\Big[\sum_{k,\ell=1}^n w(k,\ell)\frac{\ell-k}{k}(x-k)+\sum_{k,\ell=1}^n w(\ell,k)\frac{k-\ell}{\ell}(x-\ell)\Big]\\
    =&~\frac{1}{4\alpha}\sum_{k,\ell=1}^n w(k,\ell)(\ell-k)\big[(\frac{x}{k}-1)-(\frac{x}{\ell}-1)\big]
    =\frac{x}{4\alpha}\sum_{k,\ell=1}^n w(k,\ell)\frac{(\ell-k)^2}{k\ell}~\ge 0.
  \end{split}
\end{equation*}
\end{proof}

\bibliography{Markov-bin-AKAD}

\begin{thebibliography}{1}

\bibitem{Dekking_Kong_2011}
Michel Dekking and DeRong Kong.
\newblock Multimodality of the {M}arkov binomial distribution.
\newblock {\em Journal of Applied Probability}, 48(4):938--953, 2011.

\bibitem{Dekking_Kong_2011_2}
Michel Dekking and DeRong Kong.
\newblock A simple stochastic kinetic transport model.
\newblock {\em arXiv: 1106.2912v1, to appear in Advances in Applied
  Probability}, 2012.

\bibitem{Michalak_Kitanidis_2000}
A.M. Michalak and Peter~K. Kitanidis.
\newblock Macroscopic behavior and random-walk particle tracking of kinetically
  sorbing solutes.
\newblock {\em Water Resources Research}, 36(8):2133--2146, 2000.

\bibitem{Omey_Santos_VanGulck_2008}
E.~Omey, J.~Santos, and S.~Van~Gulck.
\newblock A {M}arkov-binomial distribution.
\newblock {\em Appl. Anal. Discrete Math.}, 2(1):38--50, 2008.

\bibitem{Uffink_Elfeki_Dekking_Bruining_Kraaikamp_2011}
G.~Uffink, A.~Elfeki, F.M. Dekking, J.~Bruining, and C.~Kraaikamp.
\newblock Understanding the non-gaussianity of reactive transport; from
  particle dynamics to {PDE}'s.
\newblock {\em Transp Porous Med}, pages 1--25, 2011.

\end{thebibliography}
\bibliographystyle{plain}
\end{document}